\theoremstyle{plain}
\newtheorem{thm}{Theorem}
\newtheorem{lemma}{Lemma}
\newtheorem{prop}{Proposition}
\newtheorem{proper}{Properties}
\theoremstyle{definition}
\newtheorem{defn}{Definition}
\theoremstyle{remark}
\newtheorem{rmk}{Remark}
\newenvironment{example}[1][0]
{ 
  \ifthenelse{\equal{#1}{0}}{
  \myexample
}
{ 
  \myexample
  \addtocounter{myexample}{-1}
}
}
{\endmyexample}
\newcommand{\R}{\mathbb{R}}{}
\newcommand{\N}{\mathbb{N}}
\newcommand{\cA}{\mathcal{A}}
\newcommand{\cC}{\mathcal{C}}
\newcommand{\cD}{\mathcal{D}}
\newcommand{\cH}{\mathcal{H}}
\newcommand{\wi}{\hat {\imath}}
\newcommand{\cK}{\mathcal{K}}
\newcommand{\cKL}{\mathcal{KL}}
\newcommand{\cM}{\mathcal{M}}
\newcommand{\cG}{\mathcal{G}}
\newcommand{\cS}{\mathcal{S}}
\newcommand{\cQ}{\mathcal{Q}}
\newcommand{\cP}{\mathcal{P}}
\newcommand{\cO}{\mathcal{O}}
\DeclarePairedDelimiterX{\inp}[2]{\langle}{\rangle}{#1, #2}
\title{Interpretability of Path-Complete Techniques and Memory-based Lyapunov functions}
\author{Matteo Della Rossa \and Rapha\"el M. Jungers
\thanks{
RJ is a FNRS honorary Research Associate. This project has received funding from the European Research Council (ERC) under the European Union's Horizon 2020 research and innovation program under grant agreement No 864017 - L2C. RJ is also supported by the Innoviris Foundation and the FNRS (Chist-Era Druid-net). }
\thanks{M. Della Rossa and R. Jungers are with ICTEAM,
        UCLouvain, Louvain-la-Neuve, Belgium).
         {\tt\small matteo.dellarossa@uclouvain.be}}%
}
\begin{document}

\maketitle
\begin{abstract}
We study path-complete Lyapunov functions, which are stability criteria for switched systems, described by a combinatorial component (namely, an automaton), and a functional component (a set of candidate Lyapunov functions, called \emph{the template}).
We introduce a class of criteria based on what we call memory-based Lyapunov functions, which generalize several techniques in the literature. Our main result is an equivalence result: any path-complete Lyapunov function is equivalent to a memory-based Lyapunov function, however defined on another template.
We show the usefulness of our result in terms of numerical efficiency via an academic example.
\end{abstract}
\section{Introduction}
Modern engineering systems are increasingly complex, and require  safety/stability guarantees, due to their interaction  with humans in the loop as, for example, smart automotive vehicles, smart grids, or articulated robots.  However, typically, these recent advances, such as for instance artificial neural networks, or random forests, are essentially ``black box'' algorithms: they are generated by massive iterative optimization schemes, processing a huge amount of data harvested automatically by sensors or other computerized sources. These tools rarely come with guarantees in terms of safety or performance, which hampers their implementation in real-world applications. 

In view of this, control theory faces a critical need for interpretable AI, where the output of massive computations can be analyzed and understood by engineers (or by a smart algorithm) in order to translate the produced knowledge into an educated analysis, which in turn can lead to a guarantee in terms of safety, efficiency, or any other objective.
A paradigmatic example of powerful blackbox data processing algorithms are LMIs; in control theory they remain a central tool for engineers. If the control task is not too complex, it is often possible to rewrite the control problem as a set of LMIs, which can be solved efficiently thanks to interior point methods.  Even more, in more involved situations, when the control task is more intricate, or for more complicated dynamical systems, relaxation techniques (Sum-Of-Squares, the S-procedure, convex relaxation,...) often allow to write LMI formulations of the control problem, potentially at the price of adding conservatism. Typically, the obtained LMIs may become a large set of algebraic inequalities, from which one can hardly extract physical sense. Being able to \emph{interpret} (that is, provide a physical meaning) to these algebraic criteria would enable engineers to verify the obtained solutions in practice, or generalize them to the real-world problem, beyond the used simplified model.

In this paper, we tackle interpretability for a well-established LMI-based stability analysis technique for hybrid systems, namely path-complete stability analysis (see Section~\ref{sec:Prelim} for definitions). We reinterpret this framework as a \emph{compressive memory} stability criterion: by making use of tools from automata and language theory, we show that any Path-complete stability criterion is in fact a Lyapunov function which takes as argument not only a point in the state space, but also a ``memory observation'', which can be of variable length. Our result is of theoretical interest, because of the interpretability mentioned above, but it is also of practical interest. Indeed, we show in Section~\ref{sec:NumExample} that our result brings more than just a qualitative interpretation: it can also provide a smart way to improve the numerical efficiency of stability analysis criteria. 
This reinterpretation of path-complete criteria in terms of memory  also allows to possibly apply these techniques not only for switched systems but for more general hybrid systems with output.

Our main result bears similarity with several classical and more recent works in the context of hybrid systems control, where memory (or similar concepts) is used as a proxy for refining an abstract representation of a dynamical system. In~\cite{SchmTab15,MajuOzat2020} and references therein, a transition system representing the possible memory-states of an observer is built in order to provide an abstraction of a given system. A novelty of our work is that we exploit memory-based transition systems, not for the sake of obtaining an abstraction of a given system, but in order to generate, or analyze, optimization techniques aiming at solving a particular problem (here, stability). Closer to our setting, in the context of Lyapunov techniques for switched systems, concepts of memory or the use of information concerning the past switching sequence, were proposed for example in~\cite{LeeDull06,Thesis:Essick,PeiLax21}. However, these works only consider fixed memory length, while our approach, relying on automata and languages, allows to handle adaptable-length memory, which can dramatically improve scalability. 

In this manuscript, after having recalled the setting and definitions in Section~\ref{sec:Prelim}, we present our equivalence results in Section~\ref{Sec:MainSec}. In Section~\ref{sec:NumExample} we apply our result to a simple academic example before concluding in Section~\ref{sec:Conclu} with some final remarks.

\textbf{Notation:} A function $\alpha:\R_{\geq 0}\to \R_{\geq 0}$ is of \emph{class $\cK$} ($\alpha \in \cK$) if it is continuous, $\alpha(0)=0$, and strictly increasing; it is of \emph{class $\cK_\infty$} if, in addition, it is unbounded. A continuous function $\beta:\R_+\times \R_+\to \R_+$ is of \emph{class $\mathcal{KL}$} if $\beta(\cdot,s)$ is of class $\cK$ for all $s$, and $\beta(r,\cdot)$ is decreasing and $\beta(r,s)\to 0$ as $s\to\infty$, for all $r$. Given $n,m\in \N$, $\cC(\R^n,\R^m)$ denotes the set of continuous functions between $\R^n$ and $\R^m$.
\section{Preliminaries}\label{sec:Prelim}
Given a discrete set of symbols $\cS$ (called, from now on, the \emph{alphabet}) and a set  $F=\{f_i\}_{i\in \cS}\subset \cC(\R^n,\R^n)$ we consider a discrete-time dynamical system of the form
\begin{equation}\label{eq:DynamicSystem}
\begin{aligned}
x(k+1)=f_{\sigma(k)}(x(k)),\;\;\;x(0)=x_0,
\end{aligned}
\end{equation}
where $\sigma:\N\to \cS$ is the so-called \emph{switching signal}. 
\begin{defn}\label{defn:Stability}
Given $\{f_i\}_{i\in \cS}\subset \cC(\R^n,\R^n)$, system~\eqref{eq:DynamicSystem} is said to be~\emph{uniformly globally asymptotically stable} (UGAS) if there exists a $\beta\in \cKL$ such that
\[
 \forall \sigma:\N\to \cS, \,\forall x_0\in \R^n, \,\forall k\in \N,\;\;|\Psi_\sigma(k,x_0)|\leq \beta(k,|x_0|),
\]
where $\Psi_\sigma(k,x_0)$ denotes the solution of~\eqref{eq:DynamicSystem} with respect to the signal $\sigma$, starting at $x_0$ and evaluated at time $k\in \N$.
\end{defn}
Several Lyapunov methods have been proposed in order to study asymptotic stability of system~\eqref{eq:DynamicSystem}, for an overview see~\cite{Lib03} and references therein. 
In what follows we recall the formal definition of the \emph{path-complete Lyapunov framework}, which can be seen as a \emph{combinatorial} multiple Lyapunov functions approach that makes use of directed and labeled graphs to encode the structure of the conditions.

 Given an alphabet $\cS$, a \emph{graph} $\cG$ on $\cS$ is defined by $\cG=(N,E)$ where $N$ and $E \subseteq N \times N \times \cS$ are the set of nodes and  of labeled edges, respectively. Any Lyapunov certificate for~\eqref{eq:DynamicSystem} needs to ensure asymptotic stability \emph{uniformly} over the set of any possible $\sigma:\N\to \cS$. In the graph framework, this requirement is formalized by a combinatorial property, the \emph{path-completeness} introduced in \cite{AJPR:14} and recalled here. 

\begin{defn}[Path-complete graph]
Given  an alphabet $\cS$, a graph $\cG=(N,E)$  on $\cS$ is \emph{path-complete} if, for any $K \geq 1$ and any sequence $\wi = (j_1\,\dots\,j_K)\in \cS^K$, there exists a \emph{path}  $\{(a_k,a_{k+1},j_k)\}_{k=1, \dots,K}$ such that $(a_k,a_{k+1},j_k)\in E$, for each $1\leq k\leq K$.
\end{defn}
Before recalling the application of path-complete graphs to stability analysis of~\eqref{eq:DynamicSystem}, we introduce some additional notions of graph theory which will be used in what follows.
\begin{proper}\label{proper:Graphs}
A graph $\cG=(N,E)$ on $\cS$ is said to be:
\begin{itemize}[leftmargin=*]
\item \emph{complete}, if, for any $a\in N$ and any $i\in \cS$, there exists $b\in N$ such that $(a,b,i)\in E$. 
\item \emph{deterministic}, if,  for any $a\in N$ and any $i\in \cS$, there exists \emph{at most} one $b\in N$ such that $(a,b,i)\in E$.
  \end{itemize}
  A complete graph is in particular path-complete. Given $\cG=(N,E)$, its \emph{dual graph} $\cG^\top$ is defined by $\cG^\top=(N^\top,E^\top)$ with $N^\top=N$ and $(a,b,i)\in E^\top$ $\Leftrightarrow$ $(b,a,i)\in E$.  A graph $\cG$ is path-complete if and only if so is $\cG^\top$.
\end{proper}

Concluding this section, we recall from~\cite{AJPR:14,PEDJ:16,PhiAth19} the concept of path-complete Lyapunov functions and the corresponding stability result.

\begin{defn}[Path-complete Lyapunov Function]\label{defn:PCLF}
Given a switching system $F = \{ f_i\}_{i\in \cS}\subset \cC(\R^n, \R^n)$, a \emph{path-complete Lyapunov function} (PCLF) for $F$ is given by $(V, \cG)$ with $\cG = (N,E)$ a path-complete graph on $\cS$, and $V = \{ V_s \mid s \in N \} \subseteq \cC(\R^n ,\R)$  such that, for some $\alpha_1,\alpha_2\in \cK_{\infty}$ and some  $\gamma\in [0,1)$, the following inequalities are satisfied: 
\begin{subequations}
    \begin{align}
\alpha_1(|x|)\leq V_s(x)\leq \alpha_2(|x|),\;&\forall s\in N,\;\forall x\in \R^n;\label{eq:Sandwich0}\\
V_b(f_i(x))\leq\gamma V_a(x),\;&\forall \, (a,b,i) \in E, \; \forall x \in \mathbb{R}^n.
\label{eq:LyapunovInequalitiesStorng}
  \end{align}
\end{subequations}
\end{defn}
\begin{prop}[Stability result,~\cite{AJPR:14,PhiAth19}]
Given any $F = \{ f_i\;\;\vert\;i\in \cS\}\subset \cC(\R^n, \R^n)$; if there exists a PCLF for $F$, then system~\eqref{eq:DynamicSystem} is~UGAS.
\end{prop}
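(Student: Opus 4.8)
The plan is to prove that the existence of a PCLF $(V,\cG)$ for $F$ implies UGAS by constructing, from the family $\{V_s\}_{s\in N}$, a single $\cKL$ bound that is valid uniformly over all switching signals $\sigma:\N\to\cS$. The key structural ingredient is path-completeness: it guarantees that every finite switching word $\wi=(j_1\dots j_K)$ is readable along \emph{some} path in $\cG$, so that the chain of decrease inequalities~\eqref{eq:LyapunovInequalitiesStorng} can be composed along that path. The main obstacle is precisely that the matching path is not known a priori and may depend on the whole word (and in particular the starting node is not fixed), so one cannot simply track a single $V_s$ along the trajectory; the argument must quantify over paths rather than follow the dynamics node-by-node.

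First I would fix an arbitrary $x_0\in\R^n$ and an arbitrary signal $\sigma$, and for each time $k$ form the prefix word $\wi_k=(\sigma(0)\,\sigma(1)\,\dots\,\sigma(k-1))\in\cS^k$. By path-completeness there is a path $(a_0,a_1,\sigma(0)),(a_1,a_2,\sigma(1)),\dots,(a_{k-1},a_k,\sigma(k-1))$ in $E$ reading $\wi_k$. Writing $x_j=\Psi_\sigma(j,x_0)=f_{\sigma(j-1)}(x_{j-1})$, I would apply~\eqref{eq:LyapunovInequalitiesStorng} edge by edge along this path to obtain
\begin{equation*}
V_{a_k}(x_k)\leq \gamma\,V_{a_{k-1}}(x_{k-1})\leq\dots\leq \gamma^{k}\,V_{a_0}(x_0).
\end{equation*}
Then I would use the sandwich bound~\eqref{eq:Sandwich0} on both ends: $\alpha_1(|x_k|)\leq V_{a_k}(x_k)$ and $V_{a_0}(x_0)\leq \alpha_2(|x_0|)$, giving $\alpha_1(|x_k|)\leq \gamma^{k}\alpha_2(|x_0|)$, hence $|x_k|\leq \alpha_1^{-1}\!\left(\gamma^{k}\alpha_2(|x_0|)\right)$. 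Note that $\alpha_1^{-1}$ exists and is of class $\cK_\infty$ because $\alpha_1\in\cK_\infty$, and this bound is independent of which path was selected, which is what makes the quantification over paths harmless.

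It remains to package the right-hand side as a single $\cKL$ function. I would define $\beta(r,s):=\alpha_1^{-1}\!\left(\gamma^{s}\,\alpha_2(r)\right)$ (extending $s$ from integers to $\R_{\geq 0}$, since $\gamma^{s}=e^{s\ln\gamma}$ is defined and continuous for real $s\geq0$ with $\gamma\in[0,1)$). For fixed $s$, the map $r\mapsto\beta(r,s)$ is a composition of class-$\cK_\infty$ functions with multiplication by the positive constant $\gamma^{s}$, hence of class $\cK$; for fixed $r$, since $\gamma\in[0,1)$ the factor $\gamma^{s}$ is decreasing in $s$ and tends to $0$ as $s\to\infty$, and continuity of $\alpha_1^{-1}$ with $\alpha_1^{-1}(0)=0$ then gives $\beta(r,s)\to0$. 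Thus $\beta\in\cKL$, and by construction $|\Psi_\sigma(k,x_0)|\leq\beta(|x_0|,k)$ for all $\sigma$, $x_0$, and $k$, which is exactly the UGAS estimate of Definition~\ref{defn:Stability}. Since $x_0$ and $\sigma$ were arbitrary, the proof is complete. The only delicate point to state carefully is the uniformity: the same $\beta$ works for every signal because the contraction rate $\gamma^k$ and the endpoint bounds $\alpha_1,\alpha_2$ do not depend on the path chosen to read the prefix word.
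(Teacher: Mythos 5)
Your proof is correct and follows essentially the same route as the paper's treatment: the paper defers this proposition to the cited references, but its own proof of the analogous Lemma~\ref{lemma:LyapFunction} uses exactly your argument --- telescoping the decrease inequality~\eqref{eq:LyapunovInequalitiesStorng} along a chain, sandwiching both ends with $\alpha_1,\alpha_2$ via~\eqref{eq:Sandwich0}, and packaging $\beta(s,k)=\alpha_1^{-1}(\gamma^k\alpha_2(s))$ as a $\cKL$ bound. Your explicit observation that the path reading each prefix word may change with $k$, and that this is harmless because the resulting bound is path-independent, is precisely the point that makes the standard argument go through.
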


\begin{rmk}
Here and in what follows, we focus on the UGAS concept only, and the corresponding path-complete Lyapunov characterization. The same ideas and techniques can be adapted, mutatis mutandis, for the stability (without convergence), and or local/practical notions of convergence. These extensions/adaptations are not explicitly developed here, for the sake of clarity. Indeed, this work does not focus on stability of particular dynamical systems, but rather we provide a meta-analysis of the stability criteria themselves.
\end{rmk}

\section{Memory-based Lyapunov functions and Equivalence with PCLF}\label{Sec:MainSec}
In this section we introduce \emph{memory-based} Lyapunov functions and we provide our main result: the equivalence between path-complete stability techniques and memory-based criteria.

\subsection{Memory-based Lyapunov functions}
In this subsection, we give a language-based interpretation of Lyapunov stability criteria for~\eqref{eq:DynamicSystem}: we will interpret the (admissible) past switching events as sequences/strings (rather than functions/signals). 
 We thus introduce the following formal definition.
\begin{defn}
Given an alphabet $\cS$, by $\cS^{\star}$ we denote the \emph{language generated by $\cS$}, defined by
\[
\cS^{\star}:=\bigcup_{k\in \N}\cS^k,
\] 
i.e. the set of all finite strings\footnote{$\cS^\star$ is also called the \emph{Kleene-closure} of $\cS$.} of elements of $\cS$.  We adopt the convention that $\cS^0=\{\epsilon\}$, with $\epsilon$ an additional symbol, denoting the \emph{empty string}.
\end{defn}
With an abuse of notation, when needed, we interpret elements of $\cS^\star$ as \emph{partial functions} $\sigma:\N\to \cS$, with the convention that, if $\sigma\in \cS^K$ (for some $K\in \N$), we write $\sigma(k)=\bot$, for $k>K$.

Given $\wi=(i_1,\dots,i_K)\in \cS^K$ and a symbol $h\in \cS$, we denote by $h\wi\in \Sigma^{K+1}$ the string defined by $(h,i_1,\dots, i_K)$.

We provide the following definition, which will allow us to consider finite partitions of $\cS^\star$.
\begin{defn}
A subset $B\subset \cS^\star$ is said to be a \emph{regular language}  if it is recognized by a finite-state \emph{automaton}.
\end{defn}
We do not give the formal definition of finite state automata, for which we refer to~\cite[Section 2.4]{CassandrasLafortune08}, where 
more insight into regular languages can also be found. Intuitively, a finite state automaton is a graph $\cG=(N,E)$ on an alphabet $\cS$, with some states in $N$ marked as \emph{initial} nodes and some marked as \emph{accepting} nodes, see Figure~1 for an example. 

Given a regular language $B$ and any symbol $h\in \cS$, $hB$ denotes the regular language $hB:=\{h\wi\;\;\vert\;\wi\in B\}$.
\begin{example}\label{example:prefixclasss1}
We provide a particular case of regular languages.
A \emph{prefix class} \emph{of length $K\in \N$} of $\cS^\star$  is a regular language defined by a multindex $\wi=(i_1,\dots, i_K)\in \cS^K$, by
\[
[\wi]:=\left \{\sigma\in \cS^\star\;\;\vert\; \sigma(k)\in\{i_k, \bot\}\,\; \forall k\in [1,K]\right\}.
\]
It is clear that any prefix class $[\wi]$ is a regular language, since it is recognized by the finite state automaton depicted in Figure~1. Prefix classes are particularly convenient in our context, since, given a prefix class of length $K$ denoted by $\wi=[i_1,\dots, i_K]$, its concatenation $h\wi$ is the prefix class of length $K+1$ given by $[h,i_1,\dots, i_K]$.
\end{example}
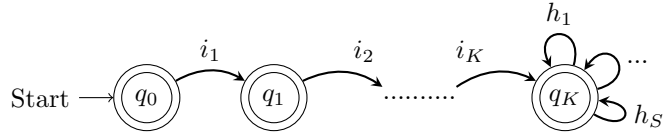
\begin{figure}[t!]
  \color{black}
  \centering
  \tikzstyle{accepting}=[path picture={%
  \draw let 
    \p1 = (path picture bounding box.east),
    \p2 = (path picture bounding box.center)
    in
      (\p2) circle (\x1 - \x2 - 3pt);
  }]
\begin{tikzpicture} [
    node distance = 1.69cm, 
    on grid, 
    auto,
    every loop/.style={stealth-}]
 
\node (q0) [state, 
    initial, 
   , accepting,
    initial text = {Start}] {$q_0$};
 
\node (q1) [state,
    right = of q0, accepting] {$q_1$};

    \node(qfict)[right =of q1]{$.....$};
 \node(qfict1)[right =of qfict,xshift=-1.2cm]{$.....$};

     \node(qK)[state,
    right = of qfict1, accepting]{$q_{K}$};
 
\path [-stealth, thick]
    (q0) edge[bend left] node {$i_1$}   (q1)
    (q1) edge[bend left] node {$i_2$}   (qfict)
    (qfict1) edge[bend left] node {$i_K$}   (qK)
    (qK) edge [loop above, out=75, in=115, looseness=5]  node {$h_1$}(qK)
    (qK) edge [loop right, out=15, in=55, looseness=5]  node {$...$}(qK)
     (qK) edge [loop right, out=-30, in=-5, looseness=7]  node {$h_S$}(qK);
    \end{tikzpicture}
\label{figute:Automataprefix class}
\caption{Automaton recognizing a prefix class $\wi=[i_1,\dots,i_K]\subset\cS^\star$, with $\cS=\{h_1,\dots, h_S\}$. The ``Start'' symbol denotes the starting nodes, while the double circle denotes an accepting node.}
\end{figure}

We now introduce a particular class of partitions of $\cS^\star$.
\begin{defn}[Covering family of languages]\label{defn:Covering}
Given a finite set of regular languages $\cC=\{B_1,\dots, B_M\}$, with $B_i\subset \cS^\star$ we say that $\cC$ is a  \emph{covering family of languages} if 
\begin{subequations}
\begin{align}
&\bigcup_{j=1}^M B_j=\cS^\star;\label{eq:covering}\\
\forall B\in\cC,\;\;\forall h\in \cS,\;\;&\exists \,C\in \cC\text{ such that }\;hB\subseteq C.\label{eq:Consistency}
\end{align}
\end{subequations}
\end{defn}
Intuitively, condition~\eqref{eq:covering} formalizes the fact that the family $\cC$ covers the whole set of possible strings, while condition~\eqref{eq:Consistency} specifies that, given $B\in \cC$, for any symbol $h\in \cS$, there is (at least) a subset of the family containing all the corresponding concatenations. 
\begin{defn}[Memory-based LF]\label{defn:FiniteMBLF}
Given a covering family of languages $\cC=\left\{B_1,\dots,B_N\right\}$, suppose there exist a continuous function $W:\cC\times \R^n\to \R$, functions $\alpha_1,\alpha_2\in \cK_\infty$, scalar $\gamma\in [0,1)$ such that
\begin{subequations}
\begin{align}\label{eq:Sandwich1}
&\alpha_1(|x|)\leq W(B,x)\leq \alpha_2(|x|),\;\;\; \forall\,(B,x)\in \cC\times \R^n;\\
&W(C,f_h(x))\leq \gamma W(B,x),\,\forall B\in \cC,\forall h\in \cS, \forall C\in \cC,\,\text{ s.t. } hB\subseteq C, \forall x\in \R^n. \label{eq:DecreasingFiniteMemory}
\end{align}
\end{subequations}
Then $W$ is said to be a \emph{$\cC$-memory-based Lyapunov function} ($\cC$-MBLF) for system~\eqref{eq:DynamicSystem}. 
\end{defn}

Any memory-based Lyapunov function provides a sufficient criteria for UGAS, as proven in the following statement.
\begin{lemma}\label{lemma:LyapFunction}
Given a covering family of language $\cC$, if there exists a $\cC$-memory-based Lyapunov function $W:\cC\times \R^n\to \R$ for~\eqref{eq:DynamicSystem}, then system~\eqref{eq:DynamicSystem} is UGAS.\end{lemma}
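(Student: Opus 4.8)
The plan is to run a multiple-Lyapunov-function/comparison argument, where the only nonstandard ingredient is that the memory argument of $W$ must be transported along the trajectory; the consistency condition~\eqref{eq:Consistency} is exactly the structural property that makes this transport possible for an \emph{arbitrary} switching signal. First I would fix an arbitrary signal $\sigma:\N\to\cS$ and initial state $x_0\in\R^n$, and write $x(k):=\Psi_\sigma(k,x_0)$ for the induced trajectory. I would then build a sequence of languages $(B_k)_{k\in\N}\subseteq\cC$ adapted to $\sigma$: choose any $B_0\in\cC$ (for instance, by~\eqref{eq:covering}, one containing the empty string $\epsilon$), and, given $B_k$, invoke~\eqref{eq:Consistency} with the symbol $h=\sigma(k)$ to select some $B_{k+1}\in\cC$ with $\sigma(k)B_k\subseteq B_{k+1}$. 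Such a $B_{k+1}$ exists at every step, so the sequence is well defined; any selection among several admissible languages is harmless, for the reason explained below.

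By construction each triple $(B_k,\sigma(k),B_{k+1})$ meets the hypothesis $\sigma(k)B_k\subseteq B_{k+1}$ of the decreasing inequality~\eqref{eq:DecreasingFiniteMemory}. Applying it at $x=x(k)$ and using $x(k+1)=f_{\sigma(k)}(x(k))$ gives $W(B_{k+1},x(k+1))\leq\gamma\,W(B_k,x(k))$ for all $k$, whence by induction $W(B_k,x(k))\leq\gamma^k\,W(B_0,x_0)$. Combining this with the sandwich bounds~\eqref{eq:Sandwich1}, that is $\alpha_1(|x(k)|)\leq W(B_k,x(k))$ and $W(B_0,x_0)\leq\alpha_2(|x_0|)$, I obtain
\[
\alpha_1(|x(k)|)\leq\gamma^k\alpha_2(|x_0|),\qquad\text{hence}\qquad |x(k)|\leq\alpha_1^{-1}\bigl(\gamma^k\alpha_2(|x_0|)\bigr).
\]
The decisive observation is that this estimate no longer mentions the memory sequence: since $\alpha_1,\alpha_2$ and $\gamma$ are uniform over the whole family $\cC$, the right-hand side depends only on $k$ and $|x_0|$, and in particular neither on $\sigma$ nor on the (possibly non-unique) choices of $B_k$. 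This is precisely what secures the \emph{uniformity} demanded by Definition~\ref{defn:Stability}.

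It then remains to repackage this bound as a $\cKL$ estimate. Setting $\beta(r,s):=\alpha_1^{-1}(\gamma^s\alpha_2(r))$ gives a continuous function that is of class $\cK$ in $r$ (a composition of $\cK_\infty$ maps) and, because $\gamma\in[0,1)$, is decreasing in $s$ with $\beta(r,s)\to0$ as $s\to\infty$; evaluated at integer $s=k$ it reproduces the estimate above, so $|\Psi_\sigma(k,x_0)|\leq\beta(|x_0|,k)$ for all $\sigma$, $x_0$, and $k$, which is exactly the UGAS bound. I expect no genuine obstacle here: the single non-routine point is to recognize that~\eqref{eq:Consistency} is the mechanism enabling the forward propagation of memory along every signal, and that the uniform sandwich bounds decouple the final rate estimate from the signal-dependent bookkeeping; the $\cKL$ packaging and the induction are entirely standard.
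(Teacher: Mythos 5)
Your proposal is correct and follows essentially the same route as the paper's proof: build the memory sequence $(B_k)$ forward along the signal using the consistency condition~\eqref{eq:Consistency}, telescope the decrease inequality~\eqref{eq:DecreasingFiniteMemory} to get $W(B_k,x(k))\leq\gamma^k W(B_0,x_0)$, and conclude via the sandwich bounds with $\beta(s,k)=\alpha_1^{-1}(\gamma^k\alpha_2(s))$. Your write-up is, if anything, slightly more explicit than the paper's about the inductive selection of the $B_k$ and about why the final estimate is uniform in $\sigma$ and in the (possibly non-unique) memory choices.
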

\begin{proof}
Consider any $x_0\in \R^n$, any $\sigma:\N\to \cS$ and any $k\in \N$. From conditions~\eqref{eq:Sandwich1},~\eqref{eq:DecreasingFiniteMemory}, we have
\[
\begin{aligned}
\alpha_1(|\Psi_{\sigma}(k,x_0)|)&\leq W(B_k, f_{\sigma(k-1)}(\Psi_{\sigma}(k-1,x_0)\,)\leq \dots \\&\leq \gamma^k W(B_0,x_0)\leq \gamma^k\alpha_2(|x_0|),
\end{aligned}
\]
where $B_0,\dots B_k\in \cC$ are chosen such that $\epsilon\in B_0$, $\sigma(0)\in B_1$, ... , $(\sigma(k-1),\dots, \sigma(0))\in B_k$ and such that $\sigma(h)B_{h-1}\subset B_{h}$ for any $h\in \{1,\dots k\}$, which is possible by properties of $\cC$ in Definition~\ref{defn:Covering}.
We thus have $|\Psi_{\sigma}(k,x_0)|\leq \beta(|x_0|,k)$, where we defined $\beta(s,k):=\alpha_1^{-1}(\gamma^k\alpha_2(s))$.
It is easy to see that $\beta\in \cKL$, and it does \emph{not} depend on $\sigma:\N\to \cS$ nor $x_0\in \R^n$, thus proving UGAS of~\eqref{eq:DynamicSystem}.
\end{proof}
\subsection{From MBLF to Path-complete Lyapunov functions}
We prove here that  memory-based Lyapunov functions can be re-interpreted as path-complete Lyapunov functions.
\begin{thm}\label{thm:FirstTransl}
Given an alphabet $\cS$, consider any covering family of languages $\cC$. There exist a complete graph $\cG_\cC=(N_\cC,E_\cC)$ and a 1-to-1 map $\Phi: N_\cC\to \cC$ such that, for any system $F=\{f_i\}_{i\in \cS}\subset \cC(\R^n,\R^n)$, the following holds:

  $W:\cC\times\R^n\to \R$ is a $\cC$-memory-based Lyapunov function for $F$ if and only if $(\{W({\Phi(s),\cdot)}\}_{s\in N_\cC}, \cG_\cC)$ is a  path-complete Lyapunov function  for $F$.
\end{thm}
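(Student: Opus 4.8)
The plan is to build $\cG_\cC$ so that its nodes are in bijection with the members of the family $\cC$ and its edges encode exactly the concatenation-inclusion constraints appearing in~\eqref{eq:DecreasingFiniteMemory}. Concretely, I would let $N_\cC$ be any finite set in bijection with $\cC$ and take $\Phi:N_\cC\to\cC$ to be that bijection, which is 1-to-1 by construction since the elements of $\cC$ are distinct. For the edges I would define
\[
E_\cC:=\{(a,b,i)\in N_\cC\times N_\cC\times\cS \;\vert\; i\,\Phi(a)\subseteq\Phi(b)\},
\]
where $i\,\Phi(a)$ denotes the concatenation language $hB$ introduced earlier. The point of this definition is that a triple $(a,b,i)$ is an edge precisely when $B=\Phi(a)$, $C=\Phi(b)$, $h=i$ satisfy $hB\subseteq C$; thus the edges of $\cG_\cC$ are placed in exact correspondence with the decrease constraints of a $\cC$-MBLF.

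Next I would check that $\cG_\cC$ is complete, so that it is admissible as the combinatorial part of a PCLF. Completeness requires that for every node $a$ and every symbol $i$ there is some $b$ with $(a,b,i)\in E_\cC$; unfolding the definition of $E_\cC$, this is exactly the statement that for every $B=\Phi(a)\in\cC$ and every $h=i\in\cS$ there exists $C\in\cC$ with $hB\subseteq C$, which is precisely the consistency condition~\eqref{eq:Consistency} of Definition~\ref{defn:Covering}. Since a complete graph is in particular path-complete (Properties~\ref{proper:Graphs}), $\cG_\cC$ is path-complete, as demanded by the PCLF definition.

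Finally I would establish the equivalence by matching the two families of inequalities term by term, under the identification $V_s:=W(\Phi(s),\cdot)$. The norm bounds are immediate: condition~\eqref{eq:Sandwich0} for $\{V_s\}_{s\in N_\cC}$ reads $\alpha_1(|x|)\le W(\Phi(s),x)\le\alpha_2(|x|)$ for all $s$, and since $\Phi$ is onto $\cC$ this is the same statement as~\eqref{eq:Sandwich1}. For the decrease inequalities, the edge condition ``$(a,b,i)\in E_\cC \Rightarrow V_b(f_i(x))\le\gamma V_a(x)$'' of~\eqref{eq:LyapunovInequalitiesStorng} becomes, after substituting $B=\Phi(a)$, $C=\Phi(b)$, $h=i$, exactly ``$hB\subseteq C \Rightarrow W(C,f_h(x))\le\gamma W(B,x)$'', which is~\eqref{eq:DecreasingFiniteMemory}. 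Because $E_\cC$ was defined to be the set of \emph{all} triples with $hB\subseteq C$, neither implication of the claimed ``if and only if'' adds or drops a single inequality, and both directions follow at once.

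The step I expect to require the most care is the edge definition itself, specifically fixing the orientation so that the source node carries the ``older'' memory $B$ and the target the extended memory $C\supseteq hB$, matching the roles of $V_a$ (evaluated at $x$) and $V_b$ (evaluated at $f_i(x)$) in~\eqref{eq:LyapunovInequalitiesStorng}. One must also confirm that permitting several targets $b$ for a fixed pair $(a,i)$ (so that $\cG_\cC$ need not be deterministic) does not over-constrain the MBLF; this is harmless precisely because~\eqref{eq:DecreasingFiniteMemory} already quantifies over \emph{all} $C$ with $hB\subseteq C$, so each such edge corresponds to a genuine MBLF constraint and conversely.
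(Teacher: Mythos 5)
Your proposal is correct and follows essentially the same route as the paper's own proof: the paper likewise takes $N_\cC$ in bijection with $\cC$, defines edges by $(s_B,s_C,h)\in E_\cC\Leftrightarrow hB\subseteq C$, obtains completeness from Definition~\ref{defn:Covering}, and identifies $V_{s_C}:=W(C,\cdot)$ so that the PCLF and MBLF inequalities coincide term by term. Your write-up simply makes explicit the details (the role of condition~\eqref{eq:Consistency} in completeness, and the exact matching of edge constraints to decrease constraints) that the paper leaves compressed into two sentences.
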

\begin{proof}
Let us consider a covering family of regular languages $\cC$.
We define a graph $\cG_\cC=(N_\cC,E_\cC)$ with $|N_\cC|=|\cC|$ nodes in a 1-to-1 correspondence to the languages in $\cC$: for any $B\in \cC$ we denote by $s_B\in N_\cC$ the corresponding node in $\cG_\cG$, i.e. $\Phi(s_B)=B$. We then define the edge set $E_\cG$ by the following condition:
\[
(s_B,s_C,h)\in E_\cC\;\; \Leftrightarrow\;\; hB\subseteq C.
\]
By the properties in Definition~\ref{defn:Covering}, the graph is complete. Then, we define $\{V_s\}_{s\in N_\cC}\subset \cC(\R^n,\R^n)$ by $V_{s_C}(x):=W(C,x)$.  Given any $F=\{f_i\}_{i\in \cS}$, conditions in Definition~\ref{defn:PCLF} are satisfied if and only if conditions in Definition~\ref{defn:FiniteMBLF} are.
\end{proof}

\begin{example}\label{example:prefixclasss2}
Consider $\cS=\{a,b\}$, a particular covering family of regular languages is given by $\cD:=\{[aa],[ab],[b]\}$, i.e. a covering of prefix classes, as introduced in Example~\ref{example:prefixclasss1}. The corresponding complete graph given by Theorem~\ref{thm:FirstTransl}, denoted by $\cH$, is depicted in Figure~\ref{figure:Figure1}. Since, in this case, for any prefix class $B$ in $\cD$ and any $h\in \cS$ there exists a \emph{unique} prefix class $C\in \cD$ such that $hB\subset C$, the corresponding graph is also deterministic. 
\begin{figure}[t!]
  \color{black}
  \centering
  \begin{tikzpicture}%
  [>=stealth,
  shorten >=0.8pt,
  node distance=0.5cm,
  on grid,
  auto,
  every state/.style={draw=black, fill=white, thick}
  ]
  \node[state] (left)  [yshift=2cm]                {$[b]$};
  \node[state] (right) [right=of left, xshift=3cm] {$[ab]$};
  \node[state] (below) [below right=of left, xshift=1.5cm, yshift=-2cm]{$[aa]$};
  \path[->]
  (left) edge[loop left=60]     node                      {$b$} (left)
  (left) edge[bend left=15]     node                      {$a$} (right)
  (right) edge[bend left=15]     node                      {$b$} (left)
  (below) edge[bend left=10]     node                     {$b$} (left)
  (right) edge[bend left=10]     node                     {$a$} (below)
  (below) edge [loop below=10] node[below] {$a$} (below)
;
\end{tikzpicture}
\caption{Graph $\cH$, path-complete representation of memory-based Lyapunov conditions arising considering  $\cD=\{[aa],[ab],[b]\}$, as in Example~\ref{example:prefixclasss2}. }
\label{figure:Figure1}
\end{figure}
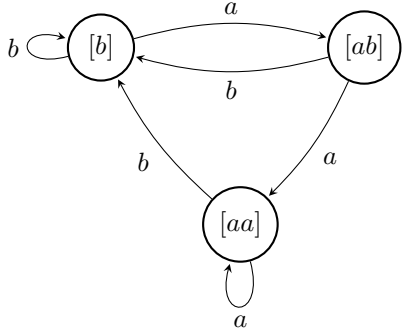

\end{example}

\begin{rmk}[Prefix classes conditions in the literature]\label{rem:Cyliders}
The relations between covering of prefix classes (as defined in Example~\ref{example:prefixclasss1}) of \emph{the same length}, stability conditions for switched systems based on ``memory'' and graph theory was partially illustrated in~\cite{LeeDull06,Thesis:Essick,DonDull20}. In these works, the Authors use a covering made by the $|\cS|^K$ prefix classes of length $K$, thereby formalizing the idea of storing, at each instant, information on the previous $K$ values of the switching signal. From a graph-theory point of view, these conditions based on fixed length prefix classes give rise to the De-Bruijn graph structure. This family of directed graphs was introduced in the seminal paper~\cite{DeBru46}, in a language theory-context, to formalize the idea of words/strings ``with the same prefixes/past''; it is thus not surprising that it arises in this context. One of our main contribution is that we generalize the results based on fixed-length prefix classes memory (which are now re-obtained as corollary) obtaining a general equivalence between graph-based and memory-based Lyapunov conditions. 
\end{rmk}
\subsection{From Path-Complete Lyapunov functions to MBLF}
In this subsection we show that any path-complete Lyapunov structure leads to a memory-based Lyapunov function.

We present our main result first, developing its proof in the remainder of this subsection.
\begin{thm}\label{thm:MainThm}
 For any path-complete graph $\cG=(N,E)$ there exists a covering family $\cC_\cG$ such that the following holds:

Given any $F=\{f_i\}_{i\in \cS}\subset \cC(\R^n,\R^n)$, if there exists a function $V = \{ V_s \mid s \in N \} \subseteq \cC(\R^n ,\R)$ such that  $(V,\cG)$ is a PCLF for $F$ then there exists a $\cC_\cG$-memory-based Lyapunov function $W:\cC_\cG\times \R^n\to \R^n$ such that  $W(B,\,\cdot)\in \cM(V):=\left\{\max_{s\in P}V_s(\,\cdot\,)\;\;\vert\;\;P\subseteq N\right \}$ for all $B\in \cC$.
\end{thm}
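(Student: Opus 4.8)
The plan is to realize the memory mechanism through the classical \emph{subset (determinization) construction} applied to the path-complete graph $\cG$, and then to let the Lyapunov value attached to a memory word be the \emph{maximum} of the PCLF components $V_s$ over the set of nodes reachable while reading that word. Concretely, for a word $w=(w_1,\dots,w_m)\in\cS^\star$ I would associate the node set
\[
P_w:=\{\,b\in N\;\vert\; \text{there is a path in }\cG\text{ reading }(w_m,\dots,w_1)\text{ and ending at }b\,\},
\]
i.e. the endpoints of paths that spell the \emph{reversed} word. Reading in reverse is the device that reconciles the two bookkeeping conventions at play: in a PCLF the edge $(a,b,i)$ records that $f_i$ sends $V_a$ to $V_b$, so the admissible node of the actual trajectory at time $k$ is the endpoint of the path spelling the \emph{chronological} past $(\sigma(0),\dots,\sigma(k-1))$, whereas the memory string of Lemma~\ref{lemma:LyapFunction} stores the past most-recent-first and grows by \emph{prepending} the latest symbol. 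With the definition above one checks the recursion
\[
P_{hw}=\{\,c\in N\;\vert\;\exists\,b\in P_w,\ (b,c,h)\in E\,\}=:\delta(P_w,h),
\]
where $\delta$ is exactly the forward subset-construction transition of $\cG$; moreover $P_\epsilon=N$, and path-completeness of $\cG$ guarantees $P_w\neq\emptyset$ for every $w$, since the reversed word can always be read.

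Next I would build the covering family. Since $P_w$ takes only finitely many values (subsets of the finite set $N$) and $P_{hw}$ depends on $w$ only through $P_w$, the map $w\mapsto P_w$ is finite-state, so for each reachable subset $P\subseteq N$ the class $B_P:=\{\,w\in\cS^\star\;\vert\;P_w=P\,\}$ is a regular language: it is the reversal of the language accepted by the subset-construction automaton with initial state $N$ and accepting state $P$, and regular languages are closed under reversal. I would set $\cC_\cG:=\{B_P\;\vert\;P_w=P\text{ for some }w\}$. These classes partition $\cS^\star$, so the covering condition~\eqref{eq:covering} holds, and the recursion $P_{hw}=\delta(P_w,h)$ shows $hB_P\subseteq B_{\delta(P,h)}$, which is precisely the consistency condition~\eqref{eq:Consistency}; in fact $B_{\delta(P,h)}$ is the \emph{unique} class containing $hB_P$, so the induced structure is deterministic, matching the graph produced by Theorem~\ref{thm:FirstTransl}.

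Finally, given a PCLF $(V,\cG)$ I would define $W(B_P,x):=\max_{s\in P}V_s(x)$, which manifestly lies in $\cM(V)$ and is continuous as a finite maximum of continuous functions. The sandwich bound~\eqref{eq:Sandwich1} is immediate from~\eqref{eq:Sandwich0}, since each $V_s$ is squeezed between $\alpha_1(|x|)$ and $\alpha_2(|x|)$ and $P\neq\emptyset$. For the decrease~\eqref{eq:DecreasingFiniteMemory}, the only $C\in\cC_\cG$ with $hB_P\subseteq C$ is $C=B_{\delta(P,h)}$, and for every $c\in\delta(P,h)$ there is, by definition of $\delta$, some $b\in P$ with $(b,c,h)\in E$; hence~\eqref{eq:LyapunovInequalitiesStorng} gives $V_c(f_h(x))\leq\gamma V_b(x)\leq\gamma\max_{b'\in P}V_{b'}(x)$, and taking the maximum over $c\in\delta(P,h)$ yields $W(B_{\delta(P,h)},f_h(x))\leq\gamma W(B_P,x)$.

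I expect the main obstacle to be conceptual rather than computational: pinning down the correct \emph{orientation} so that three arrows point the same way — the edge convention of the PCLF inequality $V_b(f_i(\cdot))\leq\gamma V_a(\cdot)$, the temporal growth of the memory string by prepending, and the transition of the determinized automaton. The reverse-reading definition of $P_w$ is exactly what aligns them, and getting it slightly wrong (for instance determinizing $\cG$ directly instead of reading reversed words, or equivalently placing $\cG^\top$ on the wrong side) would break either the regularity argument or the matching between $\delta$ and the edge set $E$. Everything else — nonemptiness from path-completeness, regularity from closure under reversal, and the two Lyapunov inequalities — then follows routinely.
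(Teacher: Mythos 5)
Your proposal is correct and follows essentially the same route as the paper: your subset-construction map $w\mapsto P_w$ (reading the reversed, i.e.\ chronological, word) is exactly the paper's observer graph $\cO_\cG$ from Lemma~\ref{lemma:Philippe}, your fibers $B_P=\{w \mid P_w=P\}$ coincide with the regular languages $B_P$ of Lemma~\ref{lemma:FirstReduction} (the paper presents them via the reversed automaton with initial state $P$ and accepting state $N$, which is the same language), and the definition $W(B_P,\cdot)=\max_{s\in P}V_s$ is identical. The only difference is presentational: where the paper cites \cite[Proposition III.1]{PhiAth19} for the decrease inequality of the pointwise maxima, you verify it directly, which makes your argument self-contained.
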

The proof of this Theorem is broken into technical lemmas, for the sake of readability.
\begin{lemma}[Adapted from~\cite{PhiAth19}]\label{lemma:Philippe}
Given any path-complete graph $\cG=(N,E)$ on $\cS$, there exists a graph $\cO_\cG=(N_\cO,E_\cO)$, called the \emph{observer graph}, with $N_\cO\subset \cP(N)$ (i.e. nodes of $\cO_\cG$ correspond to subsets of nodes of $\cG$) which is deterministic and complete, such that the following property holds: 

Consider any system $F=\{f_i\}_{i\in \cS}$. If $V = \{ V_s \mid s \in N \} \subseteq \cC(\R^n ,\R)$ is such that $(V,\cG)$ is a PCLF for $F$ then, defining $W=\{\max_{s\in P}{V_s}\;\vert\; P\in N_\cO\}$ one has that $(W,\cO_\cG)$ is a PCLF for $F$.
\end{lemma}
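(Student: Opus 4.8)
The plan is to build $\cO_\cG$ by the classical subset (powerset) determinization of $\cG$, and then to check that the max-functions over subsets inherit the Lyapunov inequalities. First I would introduce, for a subset $P\subseteq N$ and a symbol $i\in\cS$, the successor set $\mathrm{Post}_i(P):=\{\,b\in N\mid \exists\, a\in P,\ (a,b,i)\in E\,\}$. I take as initial node the full set $N$, and let $N_\cO\subseteq\cP(N)$ be the collection of subsets \emph{accessible} from $N$ under repeated application of the maps $\mathrm{Post}_i$; the edge set is $E_\cO:=\{\,(P,\mathrm{Post}_i(P),i)\mid P\in N_\cO,\ i\in\cS\,\}$. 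By construction each pair $(P,i)$ admits exactly one outgoing edge, so $\cO_\cG$ is deterministic, and $N_\cO$ is finite since $\cP(N)$ is.

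The crux --- and the step where path-completeness is indispensable --- is \emph{completeness} of $\cO_\cG$, i.e. that $\mathrm{Post}_i(P)$ is a genuine (nonempty, accessible) node for every $P\in N_\cO$ and every $i\in\cS$. I would characterize the accessible subsets as $\delta(N,w):=\{\,b\in N\mid \exists \text{ a path of }\cG\text{ reading the word }w\text{ with terminal node }b\,\}$ (so $\delta(N,\epsilon)=N$), and verify the identity $\mathrm{Post}_i(\delta(N,w))=\delta(N,wi)$, where $wi$ is the word $w$ followed by $i$. Nonemptiness then follows exactly from path-completeness: the word $wi$ is readable along some path of $\cG$, whose terminal node lies in $\delta(N,wi)$. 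I would stress here that passing to the \emph{accessible} part is essential: path-completeness (unlike completeness of $\cG$) does not guarantee that an arbitrary node of $\cG$ has an outgoing $i$-edge, and only along words read from the full initial set $N$ is nonemptiness propagated. This is the main obstacle and the conceptual heart of the lemma.

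It remains to transfer the Lyapunov inequalities, which is routine. Setting $W_P:=\max_{s\in P}V_s$ for $P\in N_\cO$, the sandwich bound \eqref{eq:Sandwich0} for $W_P$ follows from the one for $V$ together with the nonemptiness of $P$, since $\alpha_1(|x|)\le \max_{s\in P}V_s(x)\le\alpha_2(|x|)$. For the decrease, fix an edge $(P,Q,i)\in E_\cO$ with $Q=\mathrm{Post}_i(P)$: every $b\in Q$ has a predecessor $a\in P$ with $(a,b,i)\in E$, whence $V_b(f_i(x))\le\gamma V_a(x)\le\gamma W_P(x)$ by \eqref{eq:LyapunovInequalitiesStorng}; taking the maximum over $b\in Q$ yields $W_Q(f_i(x))\le\gamma W_P(x)$. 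Thus $(W,\cO_\cG)$ satisfies Definition~\ref{defn:PCLF}, and since $\cO_\cG$ is complete it is in particular path-complete, concluding the argument.
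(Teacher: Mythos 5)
Your proposal is correct and follows essentially the same route as the paper: both construct $\cO_\cG$ by the subset (determinization) construction seeded at the full node set $N$, with edges given by the successor-set map, and both invoke path-completeness in exactly the same way to rule out the empty set and thereby obtain completeness of the observer (the paper's $N(\wi)$ are precisely your accessible sets $\delta(N,w)$, up to the paper's reversed convention of prepending the most recent symbol). The only difference is that you prove the transfer of the inequalities \eqref{eq:Sandwich0}--\eqref{eq:LyapunovInequalitiesStorng} to the max-functions $W_P$ explicitly, whereas the paper delegates that step to~\cite[Proposition III.1]{PhiAth19}; your short argument is a valid and self-contained substitute.
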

\begin{proof}
The construction of the observer graph is sketched in what follows. We point out that it is recalled here from~\cite[Theorem III.8]{PhiAth19} for the sake of self-containment and in a version more suited for our purposes. Similar construction in a more general context can be found in ~\cite[Section 2.3.4.]{CassandrasLafortune08}.

Given $\cG=(N,E)$, the graph $\cO_\cG=(N_\cO,E_\cO)$ is constructed by steps. We recursively define the~\emph{observer graph} $\cO_\cG=(N_\cO,E_\cO)$ as follows: first, we consider $N_\cO:=\{N\}$, $E_\cO=\emptyset$ and for any $h\in \cS$, defining the set 
\[
N(h):=\{q\in N\;\vert\;\exists p\in N\text{ s.t } (p,q,h)\in E\}\subseteq N,
\]
we add to $N_\cO$ and $E_\cO$, respectively, the nodes and edges
\[
N(h)\in N_\cO,\;\;\;\;(N,N(h),h)\in E_\cO.
\]
Then we iterate the procedure,  considering strings of length $K$: for any $\wi=(h_1,h_2,\dots, h_K)\in \cS^K$, we introduce the notation $\wi^-:=(h_2,\dots,h_K)$ and we define
\[
N(\wi)=\left\{q\in N\;\Big \vert\;\begin{aligned} &\exists p\in N(h_2,h_3,\dots,h_{K})\\&\text{ s.t } (p,q,h_1)\in E \end{aligned} \right\}\subseteq N,
\]
and we add the nodes and edges
\begin{equation}\label{eq:ObserverIDea}
N(\wi)\in N_\cO,\;\;\;\;(N(\wi^-), N(\wi),h)\in E_\cO.
\end{equation}
We underline that the same subset $Q\subset P$ can correspond to several (and in general infinite) strings $\wi\in \cS^K$.
By path-completeness of $\cG$, this procedure will never reach the empty set $\emptyset\in \cP(N)$: if, by contradiction, for some $\wi\in \cS^\star$ we have $N(\wi)=\emptyset$, then there exists no path in $\cG$ (starting from any node) labeled by $\wi$, this contradicting Definition~\ref{defn:PCLF}. 

 By finiteness of $\cP(N)$ (the power set of $N$), this procedure ends after a finite number of set. For the explicit implementation of the algorithm, we refer to~\cite{PhiAth19}. By construction, the obtained graph is complete and deterministic.

The second statement in Lemma~\ref{lemma:Philippe}, leading to the construction of a path-complete function $(W,\cO_\cG)$ composed by pointwise maxima of a path-complete function $(V,\cG)$ is proven in~\cite[Proposition III.1]{PhiAth19}, to which we refer.
\end{proof}
It can be shown that there exists a unique strongly connected subgraph of $\cO_\cG$ (and thus, also complete and deterministic), as proven in~\cite{PhiAth19}. This graph will provide somehow a more concise memory-based representation of the considered path-complete Lyapunov function, but, for the sake of clarity, we do not develop the argument in this submission.

We now show that given a path-complete graph $\cG$, its observer $\cO_\cG$ allows to define a particular covering family of languages, which will be a crucial step for the proof of Theorem~\ref{thm:MainThm}.

\begin{lemma}\label{lemma:FirstReduction}
Consider any path-complete graph $\cG=(N,E)$, and its observer graph $\cO_\cG=(N_\cO,E_\cO)$ defined in proof of Lemma~\ref{lemma:Philippe}. To any $P\in N_\cO$ we can associate a regular language $B_P\subset\cS^\star$ such that the corresponding $\cC_\cG=\{B_P\}_{P\in N_\cO}$, is a covering family of languages, with the property that, for any $P,Q\in N_\cO$, we have
\begin{equation}\label{eq:ObserevrisCool}
(P,Q,h)\in E_\cO\;\;\Leftrightarrow\;\;hB_P\subseteq B_Q.
\end{equation}
\end{lemma}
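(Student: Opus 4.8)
The plan is to associate, to each observer node $P \in N_\cO$, the language $B_P$ consisting precisely of those strings that ``drive the observer to $P$''. Concretely, recall that in the observer construction of Lemma~\ref{lemma:Philippe}, every string $\wi \in \cS^\star$ determines a subset $N(\wi) \subseteq N$, and by path-completeness this is never empty, so $\wi \mapsto N(\wi)$ is a well-defined total map from $\cS^\star$ into $N_\cO$. I would therefore define
\[
B_P := \{\wi \in \cS^\star \mid N(\wi) = P\},
\]
with the convention $N(\epsilon) = N$, so that $\epsilon \in B_N$. The first thing to verify is that each $B_P$ is a regular language: this is immediate because $\cO_\cG$ is itself a finite-state automaton on $\cS$ (deterministic and complete by Lemma~\ref{lemma:Philippe}), and $B_P$ is exactly the set of strings accepted when $P$ is taken as the sole accepting state and $N$ as the initial state. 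Hence $\cC_\cG = \{B_P\}_{P \in N_\cO}$ is a finite collection of regular languages.

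Next I would check the two defining properties of a covering family from Definition~\ref{defn:Covering}. Condition~\eqref{eq:covering}, that $\bigcup_P B_P = \cS^\star$, follows because $N(\wi)$ is defined (and nonempty) for \emph{every} $\wi \in \cS^\star$, so every string lands in exactly one $B_P$; in fact the $B_P$ form a genuine partition of $\cS^\star$, which is a stronger statement than mere covering. For condition~\eqref{eq:Consistency}, I would show the sharper biconditional~\eqref{eq:ObserevrisCool} directly, since it implies consistency. The key computation is the relation between $N(\wi)$ and $N(h\wi)$ given by the recursion in the observer construction: reading the definition of $N(\wi)$ in the proof of Lemma~\ref{lemma:Philippe} with first symbol $h$, one has $N(h\wi) = \{q \in N \mid \exists p \in N(\wi)\text{ s.t. } (p,q,h) \in E\}$, i.e. $N(h\wi)$ depends on $\wi$ only through $N(\wi)$. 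Since $\cO_\cG$ is deterministic, for each pair $(P,h)$ there is a \emph{unique} $Q$ with $(P,Q,h) \in E_\cO$, and by the construction this $Q$ equals $N(h\wi)$ for any $\wi \in B_P$.

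With that recursion in hand, the equivalence~\eqref{eq:ObserevrisCool} falls out. For the forward direction, suppose $(P,Q,h) \in E_\cO$ and take any $\wi \in B_P$, so $N(\wi) = P$; then $N(h\wi) = Q$ by the recursion and determinism, hence $h\wi \in B_Q$, giving $hB_P \subseteq B_Q$. For the converse, if $hB_P \subseteq B_Q$ then, picking any $\wi \in B_P$ (nonempty because the $B_P$ partition $\cS^\star$ and every observer node is reachable by construction), we get $h\wi \in B_Q$, i.e. $N(h\wi) = Q$; but $N(h\wi)$ is determined by $N(\wi) = P$ and $h$, and equals the unique $Q'$ with $(P,Q',h)\in E_\cO$, so $Q = Q'$ and the edge is present. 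This same argument supplies~\eqref{eq:Consistency}: given $B_P \in \cC_\cG$ and $h \in \cS$, completeness of $\cO_\cG$ guarantees an edge $(P,Q,h)\in E_\cO$, and then $hB_P \subseteq B_Q$.

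The main obstacle I anticipate is not any single hard step but rather pinning down two points that the observer construction makes plausible yet must be stated carefully: first, that the map $\wi \mapsto N(\wi)$ is genuinely \emph{total} and single-valued on $\cS^\star$ (this rests on path-completeness to rule out $\emptyset$, and on determinism of $\cO_\cG$ for single-valuedness, both already established in Lemma~\ref{lemma:Philippe}); and second, the convention for reading strings, since the observer recursion prepends symbols while concatenation $h\wi$ also prepends $h$ — one must be consistent about whether $N(\wi)$ tracks the observer state after reading $\wi$ from the front or the back. Aligning the indexing in the recursion $N(h\wi)$ with the concatenation convention $h\wi$ used in~\eqref{eq:ObserevrisCool} is the one place where a sign-error-style slip could occur, so I would fix the reading direction explicitly before carrying out the biconditional, after which the proof is essentially bookkeeping.
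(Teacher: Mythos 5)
Your proof is correct and follows essentially the same route as the paper: the paper's $B_P$ --- defined as the language recognized by the reversed observer graph $\cO_\cG^\top$ with $P$ as initial state and $N$ as accepting state --- is exactly your set $\{\wi \in \cS^\star \mid N(\wi)=P\}$, and both arguments obtain the covering property from totality of $\wi\mapsto N(\wi)$ and derive the biconditional~\eqref{eq:ObserevrisCool} from completeness and determinism of the observer. The only difference is presentational and concerns precisely the reading-direction subtlety you flagged: the paper resolves it by working with the dual graph and swapping initial/accepting states (so that standard left-to-right acceptance yields $B_P$ rather than its reversal), whereas you argue directly with the forward transition recursion $N(h\wi)$.
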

\begin{proof}
Given $\cG=(N,E)$, we consider its observer graph $\cO_\cG=(N_\cO,E_\cO)$ as defined in proof of Lemma~\ref{lemma:Philippe}.
 For any $P\in N_\cO$, we define the corresponding regular language, $B_P\subset \cS^\star$ by defining an automata $\cA_P$ as follows:
 \begin{itemize}[leftmargin=*]
\item Reverse the graph $\cO_\cG$, i.e., consider the dual graph $\cO_\cG^\top$, (recall the definition in Properties~\ref{proper:Graphs}).
\item Mark the node $P$ as the unique initial state.
\item Mark the node  $N$ as the unique accepting state.
 \end{itemize}
 Then, $B_P$ is the set of strings recognized by $\cA_P$.

We then prove that $\cC_\cG:=\{B_P\}_{P\in N_\cO}$ is a covering family of languages. First, observe that the definition of the observer graph in proof of Lemma~\ref{lemma:Philippe} implies condition~\eqref{eq:covering}. Indeed, in this definition, we consider all the possible strings $\wi\in \cS^\star$. Also, condition~\eqref{eq:Consistency} is implied by completeness of $\cO_\cG$ and by condition~\eqref{eq:ObserevrisCool} which is proven in what follows:

Suppose that $(P,Q,h)\in E_\cO$, for some $P,Q\in N_\cO$ and $h\in \cS$. Consider any string $\wi\in B_P$, i.e. there exists a path $\mathbf{p}$ in $\cO_\cG^\top$ starting at $P$, ending at $N$ and labeled by $\wi$. Then, there also exists a path in $\cO_\cG^\top$ starting at $Q$ labeled by $h\wi$ and ending at $N$: simply concatenating the edge $(Q,P,h)\in E^\top_\cO$ to the previously chosen path $\mathbf{p}$. Thus we have proven that $h\wi\in B_Q$.
For the other direction suppose $hB_P\subseteq B_Q$, then consider a string $\wi\in B_P$; by definition of $B_P$ there is a path in $\cO_\cG^\top$ from the node $P$ to the node $N$ labeled by $\wi$. The fact that $h\wi\in B_Q$ implies that there is a path in $\cO_\cG^\top$ from the node $Q$ to the node $N$  labeled by $h\wi$. This, by the construction provided in proof of Lemma~\ref{lemma:Philippe}, implies that $(P,Q,\wi)\in E_\cO$ concluding the proof.
\end{proof}

 We can now prove our ``translation'' result.
\begin{proof}[Proof of Theorem~\ref{thm:MainThm}]
Given any $\cG=(N,E)$ we build the corresponding observer graph $\cO_\cG=(N_\cO,E_\cO)$, as presented in the proof of Lemma~\ref{lemma:Philippe}. We then consider the corresponding covering family of languages $\cC_\cG=\{B_P\}_{P\in N_\cO}$ as in Lemma~\ref{lemma:FirstReduction}. 
Consider any system $F=\{f_i\}_{i\in \cS}$ and suppose that $V=\{V_s\;\vert\;s\in P\}$ is such that $(\cG,V)$ is a PCLF for $F$.
We define $W:\cC_\cG\times \R^n\to \R$, by
\[
W(B_P,x):=\max_{s\in P} \{V_s(x)\},
\]
  By the second statement in Lemma~\ref{lemma:Philippe} and by equation~\eqref{eq:ObserevrisCool} the function $W$ satisfies the conditions in Definition~\ref{defn:FiniteMBLF}, concluding the proof.
\end{proof}
Concluding this section, we discuss the possibility of having ``dual'' results, involving the concept of \emph{``future''} based Lyapunov functions.
\begin{rmk}[Memory vs Future Characterization]
In this work, we focus on memory-based Lyapunov functions (as defined in Definition~\ref{defn:FiniteMBLF}), and we prove that this formalism is equivalent to the path-complete Lyapunov functions framework (cfr. Definition~\ref{defn:PCLF}), in Theorem~\ref{thm:FirstTransl} and Theorem~\ref{thm:MainThm}.
To prove this equivalence, it is necessary, given a path-complete Lyapunov function, to consider pointwise maxima of the original composing functions. By duality, i.e. developing the arguments for the dual graphs (and considering pointwise minima), similar results can be obtained for ``future''-based Lyapunov functions; this will be the topic of future research.
\end{rmk}

\section{Numerical Example}\label{sec:NumExample}
In this section, we present a numerical example, showing how memory-based Lyapunov functions and their path-complete counterparts can be beneficial for the analysis of dynamical systems. 
We consider a linear switched system already studied in~\cite[Example 5.4]{AJPR:14}, and show how a particular (and non-standard) memory structure can provide a numerically appealing stability criterion with respect to more classical approaches, without losing in conservatism.
\begin{example}
We consider the alphabet $\cS=\{a,b\}$, and we consider the \emph{linear} switched system
\begin{equation}\label{eq:FirstExample}
x(k+1)=A_{\sigma(k)}x(k)
\end{equation}
where $\sigma:\N\to \cS$, and
\[
A_a:=\begin{bmatrix}3 & \,3\\ -2 & \,1\end{bmatrix},\;\;\; A_b:=\begin{bmatrix}-1 & -1 \\ -4 & 0  \end{bmatrix}.
\]
We want to use \emph{quadratic} memory-based Lyapunov functions (as in Definition~\ref{defn:FiniteMBLF}) to estimate the maximal growth rate of solutions of~\eqref{eq:FirstExample}, a.k.a. the \emph{joint spectral radius} of $\{A_a,A_b\}$ denoted by $\rho(A_a,A_b)$, see~\cite{Jung09} for more discussion. More formally, we consider the set of quadratic functions $\cQ=\{f(x):=x^\top P x\;\;\vert\;P\in \R^{2\times 2}, P\succ 0\}$. Given a covering family $\cC$ on $\cS$, we consider the corresponding graph $\cG_\cC=(N_\cC,E_\cC)$ given by Theorem~\ref{thm:FirstTransl}. Then, the best upper bound of $\rho(A_a,A_b)$ given by quadratic $\cC$-based Lyapunov functions, denoted by $\rho_{\cC,\cQ}(A_a,A_b\}$, is given by the solution of the following semi-definite optimization problem:
\begin{equation}\label{eq:optimprob}
\begin{aligned}
&\min_{\rho>0,P_s\in \R^{2\times 2}} \rho,\;\;\;\text{ s.t.}
\\
&P_s\succ 0,\;\;\forall s\in N_\cC\\
&A_h^\top P_qA_h-\rho^2P_r\prec 0,\;\;\forall (r,q,h)\in E_\cC,
\end{aligned}
\end{equation}
i.e. we search quadratic functions satisfying the conditions in Definition~\ref{defn:FiniteMBLF}, minimizing the parameter $\rho$. We compare three different coverings: $\cC_1=\{[a],[b]\}$ made of all the prefix classes of length $1$ (with its graph structure depicted in Figure~3); $\cC_2=\{[aa],[ab],[ba],[bb]\}$ i.e. the partition of all the prefix classes of length $2$ (with its graph structure depicted in Figure~4), and the partition $\cD=\{[aa],[ab],[b]\}$, already considered in Example~\ref{example:prefixclasss2}, and represented in Figure~\ref{figure:Figure1}. We underline that the conditions arising from the coverings $\cC_1$ and $\cC_2$, since they are composed by fixed-length prefix classes, correspond to particular instances of the conditions proposed in~\cite{EssickLee14,Thesis:Essick}, as already underlined in Remark~\ref{rem:Cyliders}.
One can see that the memory information contained in $\cC_1$ is lesser than the one contained in $\cD$, which is lesser than the one in $\cC_2$. As a result, one can show that
\[
\rho_{\cC_2,\cQ}(A_a,A_b)\leq \rho_{\cD,\cQ}(A_a,A_b)\leq\rho_{\cC_1,\cQ}(A_a,A_b)
\]
i.e. the estimation of the JSR of $\{A_a,A_b\}$ given by the memory structure $\cC_2$ is not worst than the one provided by $\cD$, which in turn is not worst than the one given by $\cC_1$. We leave the formalization of this result for further work because of space constraints. In TABLE~I  we report the values obtained by solving the optimization problem, using \textsc{SeDuMi} solver in \textsc{Matlab}, see~\cite{Yalmip04}.
\begin{table}[b!]
\vspace{0.1cm}
\centering
\caption{\normalfont {Numerical upper bounds of maximal growth rate of~\eqref{eq:FirstExample}, obtained with different finite-memory structures.}}
\begin{tabular}{ |c|c|c|c|} 
 \hline
Mem. Structure : & $\cC_1$ & $\cD$ & $\cC_2$  \\ 
 \hline
  $\rho_{\cC,\cQ}(\cA,\cG)$ & $3.9224$ & $3.9174$ & $3.9174$ \\
  \hline
\end{tabular}
\end{table}

The results in the table show the following: memory-based conditions arising from $\cD$ and $\cC_2$ are \emph{strictly better} than the ones induced by $\cC_1$. On the other hand, there is no remarkable difference between the upper bound provided by $\cD$ and $\cC_2$. In other words, in this particular case, the fact of storing strings of length $2$ is not beneficial with respect to conditions requiring to store strings of length $2$ \emph{only if} the previous active system is $a$ (i.e. the conditions encoded in $\cD$). We stress that, from a numerical point of view, the conditions induced by $\cD$ are preferable, since the corresponding semidefinite optimization problem~\eqref{eq:optimprob} (for a fixed $\rho>0$) has $3$ semidefinite variables and $6$ LMIs (as constraints) while the optimization problem corresponding to $\cC_2$ has $4$ semidefinite variables and $8$ constraints inequality.
\end{example}

This simple example showed that, even in very restricted settings ($2$-mode planar linear switched systems), considering non-uniform memories (as it was the case for the partition given by $\cD$) can improve, from a numerical point of view, the stability conditions arising when considering conditions based on fixed-length memories as in~\cite{EssickLee14,Thesis:Essick}.

\begin{figure}[t!] \label{Fig:FirstDeBrujin}
  \centering
\begin{tikzpicture}%
  [>=stealth,
  shorten >=0.8pt,
  node distance=3cm,
  on grid,
  auto,
  every state/.style={draw=black, fill=white, thick}
  ]
\node[state,inner sep=1pt, minimum size=22pt] (left)                  {$[a]$};
\node[state,inner sep=1pt, minimum size=22pt] (right) [right=of left] {$[b]$};
\path[->]
   (left) edge[bend left=20]     node          [scale=1]            {$b$} (right)
        (right)   edge[bend left=20] node        [scale=1]        {$a$} (left)
   (left) edge[loop left=60]     node            [scale=1]          {$a$} (left)
   (right) edge[loop right=60]     node      [scale=1]                {$b$} (right)
   ;
\end{tikzpicture}
\label{figure:DeBrujin1}
\caption{The graph $\cG_1$, corresponding to the covering $\cC_1=\{[a],[b]\}$, a.k.a. the De Bruijn graph of order $1$.}
  \end{figure}
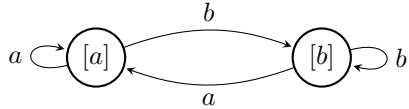

  \begin{figure} \label{Fig:2DeBrujin}
  \centering
\begin{tikzpicture}%
  [>=stealth,
  shorten >=1pt,
  node distance=1.2cm,
  on grid,
  auto,
  every state/.style={draw=black, fill=white, thick}
  ]
  \node[state, inner sep=1pt, minimum size=23pt] (left)                  {$[aa]$};
  \node[state, inner sep=1pt, minimum size=23pt] (right) [right=of left, xshift=3.3cm] {$[bb]$};
  \node[state, inner sep=1pt, minimum size=23pt] (upper) [above right=of left, xshift=1.5cm, yshift=-0.05cm]{$[ba]$};
  \node[state, inner sep=1pt, minimum size=23pt] (below) [below right=of left, xshift=1.5cm, yshift=0.05cm]{$[ab]$};
  \path[->]
  (left) edge[loop left=60]     node             [scale=0.9]             {$a$} (left)
  (left) edge[bend left=15]     node                      {$b$} (upper)
  (upper) edge[bend left=15]     node                      {$a$} (below)
  (below) edge[bend left=15]     node                      {$a$} (left)
  (below) edge[bend left=15]     node                      {$b$} (upper)
  (right) edge[bend left=15]     node                      {$a$} (below)
  (upper) edge[bend left=15]     node                      {$b$} (right)
  (right) edge[loop right=300]     node            [scale=0.9]              {$b$} (right)
;
  \end{tikzpicture}

  \caption{The graph $\cG_2$, corresponding to the covering $\cC_2:=\{[aa],[ab],[ba],[bb]\}$, a.k.a. the De Bruijn graph of order $2$.}
\end{figure}
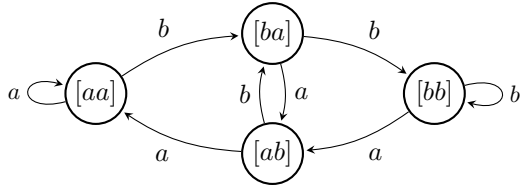

\section{Conclusion}\label{sec:Conclu}
In this work, we presented a new interpretation of path-complete Lyapunov functions, in terms of time-dependent Lyapunov function structure, whose dependence in time is restricted to the last values taken by the switching signal (the ``memory'').
Our result provides interpretability for the path-complete Lyapunov techniques, and this may be useful for control engineers, in that it relates stability properties with the nature of the observation.
From a more quantitative point of view, our results allow to compress Lyapunov criteria, as seen in Section~ \ref{sec:NumExample}.
In the future, we will leverage this new interpretation of Path-Complete Lyapunov functions to handle more general systems for which information about the state can be compressed in a discrete set of observations.
We will also leverage our results in order to formalize performance order relations between different path-complete Lyapunov functions. 

\bibliography{biblio} 
\bibliographystyle{plain}
\end{document}